\newcommand{\C}{\ensuremath{\mathbb{C}}}
\theoremstyle{plain}
\newtheorem{theorem}{Theorem}
\newtheorem{proposition}[theorem]{Proposition}
\newtheorem{lemma}[theorem]{Lemma}
\newtheorem{corollary}[theorem]{Corollary}
\theoremstyle{definition}
\newtheorem{remark}[subsection]{Remark}
\newtheorem{nothing*}[subsection]{}
\begin{document}

\markboth{Luka Boc Thaler}{Entire functions with prescribed singular values}

\title{Entire functions with prescribed singular values
 }

\author{Luka Boc Thaler}

\address{L. Boc Thaler: Faculty of Education, University of Ljubljana, SI--1000 Ljubljana, Slovenia. Institute of Mathematics, Physics and Mechanics, Jadranska 19, 1000 Ljubljana, Slovenia.} \email{luka.boc@pef.uni-lj.si}
\thanks{The research program P1-0291 from ARRS, Republic of Slovenia}

\begin{abstract}
 We introduce a new class of entire functions $\mathcal{E}$ which consists of all $F_0\in\mathcal{O}(\C)$ for which there exists a sequence $(F_n)\in \mathcal{O}(\C)$ and a sequence $(\lambda_n)\in\C$ satisfying $F_n(z)=\lambda_{n+1}e^{F_{n+1}(z)}$ for all $n\geq 0$. This new class is closed under the composition and its is dense in the space of all non-vanishing entire functions.  We prove that every closed set $V\subset \C$ containing the origin and at least one more point is the set of singular values of some locally univalent function  in $\mathcal{E}$, hence this new class has non-trivial intersection with both the Speiser class and the Eremenko-Lyubich class of entire functions. As a consequence we provide a new  proof of an old result by Heins which states that  every closed set $V\subset\mathbb{C}$  is the set of singular values of some locally univalent entire function. The novelty of our construction is that these functions are obtained as a uniform limit of a sequence of entire functions, the process under which the set of singular values is not stable. Finally we show that the class $\mathcal{E}$ contains functions with an empty Fatou set and also functions whose Fatou set is non-empty. 
\end{abstract}

\maketitle



\section{introduction}

Let $f:\mathbb{C}\rightarrow\mathbb{C}$ be an entire function. A \emph{critical value} is a point $w=f(z)$ where $z$ is a critical point of $f$, i.e. $f'(z)=0$. A point $w\in \mathbb{C}$ is an \emph{asymptotic value}  if there exists a path $\gamma:[0,\infty) \rightarrow\mathbb{C}$ satisfying $\gamma(t)\rightarrow \infty$ and $f(\gamma(t))\rightarrow w$  as $t\rightarrow\infty$.  By $CV(f)$ and $AV(f)$ we denote respectively the sets of critical values and asymptotic values of $f$. The set of \emph{singular values} is defined as the closed set 
$$S(f):=\overline{AV(f)\cup CV(f)}$$
and recall that  $z\in\mathbb{C}\backslash S(f)$ if and only if there exists a neighbourhood $U$ of $z$ so that $f:f^{-1}(U)\rightarrow U$ is an unbranched covering.

A dynamical system given by the iterates of a function $f$ is to a large extent determined by its singular values (see \cite{sch}). For example we know that every attracting cycle  and every parabolic cycle of Fatou components contains a singular value. In particular this tells us that the function with a finite number of singular values can only have a finite number of attracting/parabolic cycles and this number is bounded above by the number of its  singular values.  It is known that very boundary point of every Siegel disks is a limit point of postsingular points, i.e. forward orbit of singular values. Singular values can also tell us something about the geometry of Fatou component. For example if an entire function has an asymptotic value, then all Fatou components are simply connected. Finally let us mention that in \cite{berg} authors have shown that all limit functions of wandering domains are limit points of the postsingular points. This elementary tool is  useful for proving the absence of wandering domains for some classes of entire functions.

Since the space of entire functions is large and it accommodates a great amount of dynamical variety, it is useful to restrict to smaller classes of functions  in order to obtain strong results.  The class $\mathcal{S}$ (Speiser class) consists of entire functions with finitely many singular values. For example  polynomials and exponential function  belong this class. The Eremenko-Lyubich class $\mathcal{B}$ first defined in \cite{erem}, consists of those entire functions for which the set of singular values is bounded in $\mathbb{C}$.
The above classes are closed under composition which  
is a consequence of the following observation. For  $f=g\circ h$ we have
\begin{equation}\label{eq}
S(f)=g(S(h))\cup S(g)  \text{ and } AV(f)=g(AV(h))\cup AV(g).
\end{equation}
Let us just mention that the class $\mathcal{B}$ exhibits a rich variety of dynamical behaviour and we refer the reader to \cite{six} for a nice survey on the dynamics of functions in this class. 

Even though there are many different techniques that can be used to construct transcendental entire functions, not many of them  give  sufficient control on the set of singular values which is crucial for producing examples in class $\mathcal{B}$. Therefore it is of importance to find new ways to construct entire functions with a control over their set of singular values.
\medskip 

In  \cite{gross}  Gross constructed a locally univalent entire function for which every point in $\mathbb{C}$ is an asymptotic value. In  \cite{heins2} Heins  proved that every Suslin analytic set in $\C$ is the set of asymptotic values of some  locally univalent entire function which in particular implies that every closed subset of $\C$ is the set of singular values of some entire function (see also \cite{heins}). Let us briefly sketch Heins construction.
 
  Given a closed set $A$ we can choose a dense, non-repetitive sequence $(a_n)\in A$ satisfying certain geometric conditions and define $A_n=\{a_1,\ldots,a_n\}$ for all $n\geq 1$. Then we can construct a monotone increasing sequence ($\Omega_n$) of simply-connected Riemann surfaces where each $\Omega_n$ has the conformal radius greater than $n$ and ramification points precisely at $A_n$ ($n=1,2,3,\ldots$). The union 
 $\Omega=\cup_n \Omega_n$ is a simply-connected parabolic Riemann surface, hence by the uniformization theorem  there exist a one-to-one map  $\phi:\C\rightarrow\Omega$. Let $\psi:\Omega\rightarrow \C$ be a locally univalent map (branched covering map).  The entire function $f:=\psi\circ \phi$ is non-constant and it satisfies $(a_n)\subset AV(f)$. With some more effort it is possible to show that actually  $AV(f)=A$. Clearly every function $f$ obtained this way is locally univalent, hence $S(f)=A$.
 
\medskip 

 Recently Bishop \cite{bishop} introduced a new technique with a good control on the set of singular values, which allows us to construct maps with a rich variety of dynamical behaviour. Given an infinite tree $T$ with a uniformly bounded geometry Bishop's theorem tells us that there exist an entire function $f$ with critical values exactly $\pm 1$ and with no asymptotic values such  that $f^{-1}([-1,1])$ is a quasiconformal perturbation of the tree $T^*$, where $T^*$ is obtained from $T$ by adding some vertices  and  branches.
Entire function $f$ is defined as $f=\psi\circ\phi$ where  $\psi$ quasiregular function satisfying  $\psi^{-1}([-1,1])=T^*$ and  $\phi$ is a quasiconformal mapping given by the measurable Riemann mapping theorem. 

Note that it is not essential that $T$ is a tree as many of the arguments still hold as long as $T$ is a bipartite graph and no two bounded components of  $\C\backslash T$ share a boundary edge. This generalization allowed Bishop to prove that for every bounded, countable sets  $A,B\subset \C$  where $A$ contains at least two points, one can find an entire function $f$ satisfying $CV(f)=A$ and $AV(f)=B$ (\cite[Corollary 9.1.]{bishop}). Note that this method always produces a function which is not locally univalent and  $f\in\mathcal{B}$.

\medskip

Tools like the uniformization theorem and the measurable Riemann mapping theorem come in handy when we wish to construct a function with certain prescribed properties, but once such  function is obtained it is almost impossible to further analyze it.  

Obtaining functions as a uniform limit of a sequence of entire functions $(f_n)$ is much less abstract process then those above, and it enables us to obtain more information about the limit function $f$.   Unfortunately the set of singular values is not stable for small perturbations of the function, hence it is very difficult or even impossible to deduce from the limiting process what the set $S(f)$ would be. For example recall that every entire function is a uniform limit of its Taylor polynomials which in particular implies that the closure\footnote{The space of entire functions $\mathcal{O}(\C)$ is equipped with a  compact-open topology.}
 of the class $\mathcal{S}$ is the space of entire functions $\mathcal{O}(\C)$. 

\medskip

In the present paper we introduce a new class of entire functions $\mathcal{E}$ and we prove that it has several interesting properties (see Section 2). This enables us to give an alternative proof of Heins result, namely we show that for every closed set $V\subset\C$, there exist a locally univalent entire function $f$ satisfying $S(f)=V$. The novelty of our approach with respect to previous constructions is that in our case the function $f$ is obtained as a uniform limit of a sequence $(f_n)$ of non-polynomial entire functions in class $\mathcal{S}$, hence we were able to control $S(f)$ in the limiting process. Functions $f_n$ are given by an explicit formula which enables us to precisely determine each $S(f_n)$ and show that $S(f_n)\subseteq S(f_{n+1})$ for all $n\geq 1$. With some more effort we finally prove that $S(f)=\overline{\cup_n S(f_n)}$. Functions $f$ constructed in this way belong to the new class of entire functions $\mathcal{E}$ which we define next.
\medskip

{\bf Notation:} Let us define $E_{\lambda}(z):=\lambda e^{z}$ and let $E^n_\lambda(z)$ denote the $n$-th iterate of $E_{\lambda}(z)$.  Given a sequence complex numbers $(\lambda_n)$ and integers $0\leq k\leq n$ we further define functions $E_{(k,n)}:=E_{\lambda_{k+1}}\circ\ldots\circ E_{\lambda_n}$ where $E_{(k,k)}(z):=z$.
\medskip

{\bf Definition:} \emph{An entire function $F_0$ belongs to the class $\mathcal{E}$ if and only if there exist a sequence of entire functions $(F_n)$ and a sequence of  complex numbers $(\lambda_n)$ satisfying $F_n(z)=E_{\lambda_{n+1}}({F_{n+1}(z)})$ for all $n\geq 0$.}
\medskip

It should be clear from this definition that if any of the constants $\lambda_n$ above is equal to zero then the function $F_0$ is necessarily constant. Observe that the class $\mathcal{E}$ is closed under the composition and closed under the multiplication with complex numbers. Moreover given any  function $F\in\mathcal{E}$ and $g\in\mathcal{O}(\C)$ and any complex number $\lambda \in\C$  we have $F(g(z))\in\mathcal{E}$ and $E_\lambda(F(z))\in\mathcal{E}$. If  $F\in\mathcal{E}$ and $F\neq  0$ for all $z \in \C$, then $1/F(z)\in\mathcal{E}$.  Clearly $\mathcal{E}$ contains all constant functions and the following theorem which is the main result of this paper will tell us that $\mathcal{E}$ also contains "many" non-trivial functions.

 \begin{theorem}\label{main1} Let $U$ be a closed subset of $\C$ containing $0$ and at least one more point. Let $K\subset \C$ be a compact set and $\epsilon>0$.  There exists a locally univalent entire function $f\in \mathcal{E}$ satisfying $S(f)=U$ and $ \|f(z)-E_1(z)\|_K\leq \epsilon$.
 \end{theorem}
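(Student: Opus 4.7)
The plan is to realize $f$ as $F_0$ in a tower $F_n = \lambda_{n+1}e^{F_{n+1}}$, for a carefully chosen sequence $(\lambda_n)\subset\C\setminus\{0\}$ and corresponding entire non-vanishing $F_n$'s. Iterating the identity $S(g\circ h)=g(S(h))\cup S(g)$ with $S(E_\lambda)=\{0\}$ yields $\{0\}\cup\{v_k\}_{k\ge 1}\subseteq S(f)$, where $v_k:=\lambda_1 e^{\lambda_2 e^{\cdots e^{\lambda_k}}}$. So the singular value requirement $S(f)=U$ reduces to making $\{v_k\}$ dense in $U$.

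I fix a sequence $(u_k)_{k\ge 1}$ in $U\setminus\{0\}$ dense in $U$, with $u_1$ close to $1$ if convenient, and a compact exhaustion $\C=\bigcup K_n$ with $K_1\supseteq K$. At stage $n$ of the induction, I select $\lambda_n$ (together with a branch of logarithm) so that $v_n=u_n$; this is possible because, for fixed nonzero $\lambda_1,\dots,\lambda_{n-1}$, the map $\lambda_n\mapsto v_n$ surjects onto $\C\setminus\{0\}$. Simultaneously I define the entire function $F_n$, choosing the branch of $\log(F_{n-1}/\lambda_n)$ so that $F_n$ is non-vanishing. Along the way, the truncations $f_n:=E_{\lambda_1}\circ\cdots\circ E_{\lambda_n}$ lie in $\mathcal{S}$, are locally univalent, satisfy $S(f_n)=\{0,v_1,\dots,v_{n-1}\}$ and the nesting $S(f_n)\subseteq S(f_{n+1})$. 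Using Runge/Arakelyan approximation, $F_n$ is arranged to closely approximate the identity on $K_n$; since $f=f_n\circ F_n$, this gives $f_n\to f$ uniformly on compacts. Taking $u_1$ close to $1$ (or prefacing with a short preparatory step) makes $f_1=E_{u_1}$ close to $E_1$ on $K$, and controlling the telescoping errors gives $\|f-E_1\|_K\le\epsilon$.

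Once the construction is complete: $f\in\mathcal{E}$ by definition; $f$ is locally univalent by Hurwitz (each $f_n$ is, and $f$ is non-constant); $\|f-E_1\|_K\le\epsilon$ as above; and $S(f)=\overline{\bigcup_n S(f_n)}=U$, where the reverse inclusion $S(f)\subseteq U$ uses a stability argument for singular values under this structured limit. The hardest step is the non-vanishing of each $F_n$: this does \emph{not} require $F_{n-1}$ to omit the value $\lambda_n$ (which would violate Picard after one more step), but rather that the lift of $F_{n-1}/\lambda_n$ through the exponential cover miss zero, equivalently that $F_{n-1}$ attain $\lambda_n$ only at points where the chosen branch of $\log(F_{n-1}/\lambda_n)$ evaluates to a nonzero multiple of $2\pi i$. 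Arranging this consistently throughout the infinite induction, while also synchronizing it with the singular-value and approximation requirements, is the delicate heart of the construction.
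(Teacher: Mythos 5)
Your high-level plan (a tower $F_n=\lambda_{n+1}e^{F_{n+1}}$ with $\lambda_n$ chosen so that the critical orbit $v_k=E_{(0,k)}(0)$ is dense in $U$, and $f=F_0$ obtained as a limit of finite truncations) is the right one and is essentially the paper's. But two places where you hand-wave are exactly where the real mathematics lives, and in one of them your proposed mechanism cannot work.

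First, you propose to build the $F_n$'s top--down by ``choosing the branch of $\log(F_{n-1}/\lambda_n)$ so that $F_n$ is non-vanishing.'' This is not possible in general. Once a holomorphic logarithm $F_n$ of $F_{n-1}/\lambda_n$ exists, it is determined up to a single additive constant $2\pi i k$; you cannot choose branches pointwise. For a fixed $k$, $F_n+2\pi i k$ vanishes exactly on $F_n^{-1}(-2\pi i k)$, and for a typical entire $F_n$ \emph{every} such set is nonempty (by Picard, a non-constant $F_n$ can omit at most one value of $\C$, and the omitted value need not be in $2\pi i\mathbb{Z}$). So after one logarithm the tower generically dies, and no amount of branch-juggling revives it. The paper sidesteps this entirely: it never extracts logarithms. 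It constructs, for \emph{every} level $k$ at once, the finite towers
$$f_{k,n}(z)=E_{(k,n)}\!\left(b_n+\frac{z}{b_0\cdots b_{n-1}}\right),\qquad b_0=1,\; b_n=2\pi m_n i+\log b_{n-1}-\log\lambda_n,$$
which automatically satisfy $f_{k,n}=E_{\lambda_{k+1}}\circ f_{k+1,n}$ and are non-vanishing by construction, and then shows that for a sufficiently fast-growing $(m_n)$ each $(f_{k,n})_n$ converges locally uniformly to an entire $F_k$. Non-vanishing of each $F_k$ is then automatic from $F_k=E_{\lambda_{k+1}}\circ F_{k+1}$. Your identification of the non-vanishing issue as ``the delicate heart'' points at a genuine obstacle in the naive approach, but the resolution is to avoid the top--down induction altogether, not to finesse branches.

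Second, and more seriously for the proof of the theorem itself, you dispose of the inclusion $S(f)\subseteq U$ with the phrase ``a stability argument for singular values under this structured limit.'' This is precisely the content the paper flags as the novelty and the difficulty: singular values are \emph{not} stable under locally uniform limits (every entire function is a limit of polynomials, which are in $\mathcal S$). The paper's Section 3 is almost entirely devoted to this inclusion. The argument is a convergence theorem for inverse branches: over $\Omega=\C\setminus\overline{\{v_k\}}$, one parametrizes the inverse branches of $f_{0,n}$ by multi-indices $I_n\in\mathbb Z^n$, proves (Lemma~\ref{lemmaseq}) that a sequence $(g_{I_n})$ of such branches can converge only if $I_n$ eventually agrees with $(k_1,\dots,k_j,m_{j+1},\dots,m_n)$, and then shows by a telescoping estimate (using the specific growth of $b_n$ and the explicit formula \eqref{gn}) that for every such admissible sequence the branches do converge locally uniformly on $\Omega$, hence every inverse branch of $F_0$ over $\Omega$ is a limit of branches of $f_{0,n}$. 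This is what forces $F_0$ to be a covering map over $\Omega$ and yields $S(F_0)\subseteq\overline{\{v_k\}}$. Your appeal to Runge/Arakelyan plays no role here and gives no control of this kind; the explicit affine reparametrization $z\mapsto b_n+z/(b_0\cdots b_{n-1})$ with the tunable integers $m_n$ is what makes both the convergence of the $f_{0,n}$ and the convergence of the inverse branches provable. As written, your proposal does not contain a proof of $S(f)\subseteq U$, and the mechanism you suggest for producing the tower does not produce one.
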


Let $c\in\C$ and define $f_c(z):=f(z)+c$. Observe that $S(f_c)=\{z\in\C\mid z-c\in S(f)\}$ and that $S(e^z)=\{0\}$, hence $S(e^z+c)=\{c\}$. This elementary observation together with Theorem \ref{main1} implies the following corollary.

\begin{corollary}[Heins54] Every closed set in $\C$ is the set of singular values of some locally univalent entire function.
\end{corollary}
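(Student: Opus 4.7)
The plan is to reduce directly to Theorem \ref{main1} via the translation remark that immediately precedes the corollary, and to patch the two cases that fall outside the hypotheses of that theorem, namely closed sets of cardinality at most one.

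First I would dispose of the degenerate cases by exhibiting explicit examples. If $V=\emptyset$, the identity $f(z)=z$ is locally univalent, has no critical values, and no finite asymptotic values (every path to $\infty$ has image tending to $\infty$), so $S(f)=\emptyset$. If $V=\{c\}$ is a singleton, the map $f(z)=e^z+c$ is entire with derivative $e^z$ nowhere zero, hence locally univalent; its only asymptotic value is $c$ (along $\gamma(t)=-t$ on the real axis), and it has no critical values, so $S(f)=\{c\}$. This is exactly the content of the observation $S(e^z+c)=\{c\}$ stated just before the corollary.

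For the main case, suppose $V$ is closed and contains at least two points. Pick any $c\in V$ and set $U:=V-c$. Then $U$ is closed, contains $0=c-c$, and contains at least one further point, so Theorem \ref{main1} (applied with any compact set $K$ and any $\varepsilon>0$) supplies a locally univalent entire function $g\in\mathcal{E}$ with $S(g)=U$. Define $f(z):=g(z)+c$. Then $f'(z)=g'(z)$ is nowhere vanishing, so $f$ remains locally univalent, and by the translation identity $S(f)=S(g)+c=U+c=V$ recorded immediately before the corollary statement.

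There is no genuine obstacle here; the argument is a packaging exercise. The only point worth isolating is that Theorem \ref{main1} requires both $0\in U$ and $|U|\ge 2$, so the empty and one-point cases must be handled by explicit models; once that is done, the corollary is a bookkeeping consequence of Theorem \ref{main1} together with the translation covariance $S(g+c)=S(g)+c$ and the fact that local univalence is preserved by addition of a constant.
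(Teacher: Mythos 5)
Your proposal is correct and follows essentially the same route the paper indicates: translate so that $0\in U$, invoke Theorem \ref{main1}, and translate back, using $S(f+c)=S(f)+c$; the singleton case is handled via $e^z+c$, exactly as in the observation preceding the corollary. The one small point you add that the paper leaves implicit is the empty-set case, which you correctly dispose of with $f(z)=z$.
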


\section{Properties of the class $\mathcal{E}$}

Let $\mu,\lambda\in\C$ and observe that $\mu$ is a repelling fixed point of $E_\lambda$ if and only if 
$|\mu|>1$ and $\lambda =\mu e^{-\mu}$. Suppose that this is the case then we know (see \cite[Corollary 8.12]{milnor}) that there exists a non-constant entire function $f$ satisfying 
\begin{equation}\label{poincare1}
f(\mu z)=E_{\lambda}({f(z)}).
\end{equation}
Function $f$ is sometimes called the Poincar\'e function\footnote{In \cite{P1,P2} Poincar\'e has studied the equation $f(\mu z) =R(f(z))$, where $R(z)$ is a rational function and $\mu\in\C$. He proved that, if 0 is a repelling fixed point of $R$ with the multiplier $\mu$, then there exists a meromorphic or entire solution of this equation.}
 of $E_\lambda$ at $\mu$ and it is unique up to the  precomposition by a dilatation, i.e. for any $c\neq0$ the function $f(c\cdot z)$ is also a Poincar\'e function of $E_\lambda$.  From the construction of such function we can deduce that $f$ has no critical values and its set of asymptotic values are precisely the orbit $\{E_\lambda^n(0)\mid n\geq 0\}$. It follows that $S(f)=\overline{\{E_\lambda^n(0)\mid n\geq 0\}}$ therefore $f\in \mathcal{B}$ if and only if  $\{E_\lambda^n(0)\mid n\geq 0\}$ is bounded. Note that the parameters $\lambda$, for which this orbit stays bounded, have been studied by Barker and Rippon  \cite{BR}.  

These Poincar\'e functions $f$ belong to our class $\mathcal{E}$, since by definition $F_k(z)=f(\mu^{-k}z)$ we have 
\begin{equation}\label{ena}F_k=E_\lambda\circ F_{k+1}
\end{equation}
 for all $k\geq0$. As we have seen above, the type of sets that can be realized as the set of singular values of a Poincar\'e function \eqref{poincare1} is very limited. The aim of this paper is to prove that we can replace $\lambda$ in \eqref{ena} by $\lambda_{k+1}\neq 0$ for every $k\geq 0$ and find a solution which satisfies $S(F_0)=\overline{\{E_{(0,n)}(0)\mid n\geq 0\}}$. Let us start with our first result.

\begin{theorem}\label{main2} For every sequence of non-zero complex numbers $(\lambda_n)_{n\geq1}$ there exists a sequence $(F_n)_{n\geq0}$ of locally univalent entire functions satisfying 
\begin{equation}\label{relation}
F_n(z)=E_{\lambda_{n+1}}({F_{n+1}(z)}).
\end{equation} 
for all $n\geq0$. Moreover for every compact set $K$ and $\epsilon>0$ the sequence $(F_n)$ can be chosen so that 
$\|F_0-E_1\|_K<\epsilon$.
\end{theorem}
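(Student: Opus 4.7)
The plan is to construct $(F_n)_{n\geq 0}$ as the uniform-on-compacts limit of finite approximating towers $F_0^{(N)},\ldots,F_N^{(N)}$ of locally univalent, nowhere-vanishing entire functions satisfying $F_n^{(N)}=\lambda_{n+1}e^{F_{n+1}^{(N)}}$ for $0\le n<N$. Start with $F_0^{(0)}(z):=e^z$; at each stage $N\mapsto N+1$ produce a new top function $F_{N+1}^{(N+1)}$ and then redefine $F_n^{(N+1)}:=\lambda_{n+1}e^{F_{n+1}^{(N+1)}}$ for $n=N,N-1,\dots,0$. Fixing a polynomially convex exhaustion $K=K_0\subset K_1\subset\cdots$ of $\C$ and a summable positive sequence $(\eta_N)$ with $\sum_N\eta_N<\epsilon$, the goal is to arrange $\|F_n^{(N+1)}-F_n^{(N)}\|_{K_n}<\eta_N$ for all $n\le N$. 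Then $(F_n^{(N)})_{N\ge n}$ is uniformly Cauchy on every $K_m$, the limits $F_n$ inherit the recursion by continuity of $\exp$, and telescoping yields $\|F_0-E_1\|_K<\epsilon$.

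The construction of $F_{N+1}^{(N+1)}$ is the heart of the matter. Since $F_N^{(N)}$ is entire and nowhere zero, $F_N^{(N)}/\lambda_{N+1}$ admits an entire logarithm $L_0$, unique modulo $2\pi i\mathbb{Z}$; set $L_k:=L_0+2\pi ik$. The points of $K_N$ where any branch $L_k$ vanishes are precisely those finitely many $z\in K_N$ at which $F_N^{(N)}(z)=\lambda_{N+1}$, so for all but finitely many $k$ the branch $L_k$ is nowhere zero on $K_N$; fix such a $k$. Because $F_N^{(N)}$ is locally univalent, $L_k'=(F_N^{(N)})'/F_N^{(N)}$ is nowhere zero on $\C$, so $L_k'/L_k$ is holomorphic and nowhere zero on a neighbourhood of $K_N$. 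Use Runge's theorem to approximate a branch of $\log(L_k'/L_k)$ on $K_N$ by an entire function $u$, and define
\[ h(z):=L_k(0)+\int_0^z e^{u(\zeta)}\,d\zeta,\qquad F_{N+1}^{(N+1)}:=e^{h}. \]
Then $h'=e^u$ is nowhere zero on $\C$, hence $F_{N+1}^{(N+1)}=e^h$ is entire, nowhere zero, and locally univalent. On $K_N$, integration gives $h\approx \log L_k$, whence $\lambda_{N+1}e^{F_{N+1}^{(N+1)}}=\lambda_{N+1}e^{e^h}$ is uniformly close to $\lambda_{N+1}e^{L_k}=F_N^{(N)}$.

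Perturbations propagate down the tower via the elementary bound $|e^a-e^b|\le |a-b|\,e^{\max(|a|,|b|)}$, so $\|F_n^{(N+1)}-F_n^{(N)}\|_{K_n}$ is amplified at each step by at most $|\lambda_{n+1}|e^{M_n}$, where $M_n$ bounds $|F_{n+1}^{(N)}|$ and $|F_{n+1}^{(N+1)}|$ on $K_n$. Choosing the Runge tolerance at stage $N$ sufficiently small relative to these finitely many accumulated constants forces $\|F_0^{(N+1)}-F_0^{(N)}\|_K<\eta_N$. Hurwitz's theorem then ensures non-vanishing and local univalence of the non-constant limits $F_n$.

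The main obstacle is producing an entire $h$ which both approximates $\log L_k$ on $K_N$ \emph{and} has $h'$ nowhere zero on all of $\C$, which is what makes $F_{N+1}^{(N+1)}=e^h$ globally locally univalent. A straight Runge approximation to $\log L_k$ will in general fail to control $h'$ at points far from $K_N$. The trick is to first Runge-approximate the derivative $(\log L_k)'=L_k'/L_k$ by $e^u$ (automatically nowhere vanishing on $\C$) and then integrate. The preliminary shift from $L_0$ to $L_k$ is a subsidiary but essential step that sidesteps a Picard-type obstruction: a non-constant $F_N^{(N)}$ will generically attain the value $\lambda_{N+1}$ on $K_N$, forcing $L_0$ to vanish there, and only the branch shift by $2\pi i k$ cures this.
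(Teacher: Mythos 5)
Your proof is correct but takes a genuinely different route from the paper's. The paper's argument is entirely explicit: it writes down the family $f_{k,n}(z) = E_{(k,n)}\bigl(b_n + z/(b_0\cdots b_{n-1})\bigr)$ with $b_n = 2\pi m_n i + \log b_{n-1} - \log\lambda_n$, so every finite tower is built from the single elementary map $E_\lambda$; the free integers $m_n$ control $|b_n|$, and the estimate $\|f_{k,n+1}-f_{k,n}\|_{\Delta_{r_n}}\le C_n/|b_n|$, with $C_n$ independent of $m_n$, makes the increments summable. You instead extend the tower one rung at a time, and the essential difficulty --- producing a new globally locally univalent, nowhere-vanishing top function $F_{N+1}^{(N+1)}$ with $\lambda_{N+1}e^{F_{N+1}^{(N+1)}}$ close to $F_N^{(N)}$ on $K_N$ --- is solved by Runge-approximating the logarithmic derivative $(\log L_k)'=L_k'/L_k$ by a nowhere-vanishing entire function $e^u$ and then integrating, thereby forcing $h'=e^u$ to be zero-free everywhere rather than only on $K_N$. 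That is a clean idea and the induction does close. Two small repairs: the constant in your definition of $h$ should be $\log L_k(0)$ rather than $L_k(0)$, so that $h\approx\log L_k$ on $K_N$; and you should verify that the limits $F_n$ are non-constant, e.g.\ by Cauchy estimates on a disk inside $K_0$ keeping $F_0'(0)$ near $(e^z)'(0)=1$ when $\sum_N\eta_N$ is small, after which the recursion passes non-constancy up the tower. The trade-off matters for the rest of the paper: your Runge-based construction proves Theorem \ref{main2} cleanly, but the paper's explicit formula for $f_{0,n}$ is exactly what makes the inverse branches $g_{I_n}$ tractable in Section 3 and hence what pins down $S(F_0)$ in Theorem \ref{main1}; it is not clear that the abstract Runge step affords that level of control.
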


\begin{proof} Let  $(\lambda_n)_{n\geq1}$ be a sequence of non-zero complex numbers.  For every $k\geq 0$ we define a sequence  of locally univalent entire functions 
\begin{equation}\label{fkn}
f_{k,n}(z):=E_{(k,n)}\left(b_{n}+ \frac{z}{b_0\cdots b_{n-1}}\right),
\end{equation}

where $b_0:=1$ and $b_n:=2\pi m_n i+ \log b_{n-1}-\log{\lambda_{n}}$ with $m_n\in\mathbb{N}$.\footnote{We take a principal branch of logarithm, that is $\log z=\log|z|+i arg(z)$, where $arg(z)\in(-\pi,\pi]$ and $\log 1=0$.}
 Notice that the functions $f_{k,n}$ are only well defined for integers $n\geq k$ and that \begin{equation}\label{eq:f}
f_{k,n}(z)=E_{\lambda_{k+1}}(f_{k+1,n}(z)).
\end{equation}
 We will prove that for sufficiently fast increasing sequence of integers $(m_n)$ the sequence of functions $(f_{k,n})$ converges uniformly on compacts to an entire function $F_k$ as $n\rightarrow \infty$ for all $k\geq 0$. 

Let $(r_n)\nearrow\infty$ be an increasing sequence of integers, and let $\Delta_{r_n}$ denote a closed disk of radius $r_n$ centred at the origin. Observe that for all $n\geq k$ we have
\begin{align*}
\|f_{k,n+1}-f_{k,n}\|_{\Delta_{r_n}} &=\left\|f_{k,n}\left(\left[ E_{\lambda_{n+1}}\left(b_{n+1}+ \frac{z}{b_0\cdots b_{n}}\right)-b_n\right]b_0\cdots b_{n-1}\right)-f_{k,n}(z)\right\|_{\Delta_{r_n}}\\
&=\left\|f_{k,n}\left(\left[E_1\left(\frac{z}{b_0\cdots b_{n}}\right)-1\right]b_0\cdots b_{n}\right)-f_{k,n}(z)\right\|_{\Delta_{r_n}}\\
&\leq M_n\cdot \left\|\left(E_1\left(\frac{z}{b_0\cdots b_{n}}\right)-1\right)b_0\cdots b_{n} -z\right\|_{\Delta_{r_n}}\leq\frac{C_n}{|b_n|} 
\end{align*}

where $M_n$ is a maximal Lipschitz constant on $\Delta_{r_n}$ of the functions $f_{0,n},\ldots, f_{n,n}$. 
Let $(\epsilon_n)$ be a decreasing sequence of real numbers satisfying $0<\epsilon_n\leq 2^{-n}$. Observe that 
\begin{align*}f_{k,n}(z)&=E_{(k,n)}\left(2\pi m_n i+ \log (b_{n-1}\lambda_{n}^{-1})+ \frac{z}{b_0\cdots b_{n-1}}\right)\\
&=E_{(k,n)}\left( \log (b_{n-1}\lambda_{n}^{-1})+ \frac{z}{b_0\cdots b_{n-1}}\right)
\end{align*} therefore the Lipschitz constant on $\Delta_{r_n}$ of the functions $f_{0,n},\ldots, f_{n,n}$ does not depend on $m_n$ hence the same holds for $M_n$. Since $E_1(z)-1-z=O(z^2)$\footnote{By $f(z)=O(g(z))$ we mean that for every compact $K$ there exists a constant $C_K>0$ such that $|f(z)|\leq C_K|g(z)|$ for all $z\in K$.}   and $b_1,\ldots, b_{n-1}$ do not depend on $m_n$, we can make sure that $C_n$ is independent from $m_n$ as well. Therefore by choosing $m_n$ sufficiently large we obtain 
\begin{equation}\label{ineq}
\|f_{k,n+1}-f_{k,n}\|_{\Delta_{r_n}}\leq \epsilon_n,
\end{equation}
for every $n\geq k$. Clearly this implies that $(f_{k,n})$ converges uniformly on compacts to an entire function $F_k$ as $n\rightarrow \infty$.

Recall that the uniform limit of locally univalent functions is either constant or locally univalent. If we show that $F_0$ is non-constant, then since for every $k\geq 0$ we have the equality $F_k(z)=E_{\lambda_{k+1}}(F_{k+1}(z))$ (see \eqref{eq:f}), it follows that functions $F_k$ are also non-constant and locally univalent. 

First observe that $E_{\lambda_k}(b_k)=b_{k-1}$ for every $k\geq0$,  hence  $f_{k,n}(0)=b_k$ and in particular $f_{0,n}(0)=1$. Since 
$f_{0,n}'(z)=\frac{1}{b_0\cdots b_{n-1}}\prod_{k=0}^{n-1}f_{k,n}(z)$ it follows that $f_{0,n}'(0)=1$.
This shows that $f_{0,n}(z)=1+z+O(z^2)$ for all $n>0$ hence the same holds for its limit, i.e. $F_0(z)=1+z+O(z^2)$ and therefore $F_0$ is non-constant locally univalent function.
 
 For the last statement of the theorem observe that $f_{0,1}(z)=E_1(z)$. If we take $r_1>0$ sufficiently large so that $K\subset \Delta_{r_1}$ and if the sequence $(\epsilon_n)$ is chosen so that $\sum_n\epsilon_n<\epsilon$ then it follows that $\|F_0-E_1\|_K<\epsilon$.

\end{proof}

\begin{remark} We have constructed a sequence of locally univalent entire functions satisfying $F_n(z)=E_{\lambda_{n+1}}(F_{n+1}(z))$ for all $n\geq 0$. Since this implies  
 $F_{0}(z)=E_{(0,n)}(F_n(z))$ for all $n\geq 0$ we can deduce from  \eqref{eq}  that
$\{E_{(0,k)}(0)\mid k\geq 0\}\subseteq S(F_{0}).$  In the last section of this paper we will prove that we can choose the sequence integers $m_n$ in the definition of $b_n$'s above so that the function $F_0$  satisfies $S(F_0)=\overline{\{E_{(0,k)}(0)\mid k\geq 0\}}$.  
\end{remark}

Next we prove that for a given non-constant function $F_0\in\mathcal{E}$ the pair of sequences $(F_n)$ and $(\lambda_n)$ given by the definition of $\mathcal{E}$ is unique.

\begin{lemma}\label{unique} Let  $F_0\in\mathcal{E}$ be a non-constant function. Then there exist a unique pair of sequences of non-zero complex numbers $(\lambda_n)$ and entire functions $(F_n)$ satisfying $F_n(z)=E_{\lambda_{n+1}}({F_{n+1}(z)})$ for all $n\geq 0$.
\end{lemma}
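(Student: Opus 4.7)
The plan is to prove uniqueness inductively, showing $F_n = G_n$ and $\lambda_n = \mu_n$ for any two candidate decompositions. The equation $\lambda_{n+1} e^{F_{n+1}} = \mu_{n+1} e^{G_{n+1}}$ a priori only forces $G_{n+1} = F_{n+1} + c$ and $\mu_{n+1} = \lambda_{n+1} e^{-c}$ for some constant $c \in \C$, leaving a continuous family of would-be ambiguities. The heart of the argument is to show that only $c = 0$ is compatible with continuing the decomposition further, and this rigidity will come from Picard's little theorem.

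First I would establish structural properties that propagate down the sequence. Since $F_0$ is non-constant, none of the $\lambda_n$ can vanish, and each $F_{n+1}$ must be non-constant (otherwise $F_n = \lambda_{n+1} e^{F_{n+1}}$ and inductively all $F_j$ with $j \leq n$ would be constant). Each $F_n$ with $n \geq 0$ is then nowhere zero, since $F_n = \lambda_{n+1} e^{F_{n+1}}$ with $\lambda_{n+1} \neq 0$. Consequently no $F_n$ can be a non-constant polynomial, since such polynomials are surjective; hence every $F_n$ is a transcendental entire function. Picard's little theorem then tells us that the transcendental function $F_{n+1}$ omits at most one value, and since it omits $0$, it attains every nonzero complex number.

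For the induction step, assume $F_n = G_n$. From $\lambda_{n+1} e^{F_{n+1}} = \mu_{n+1} e^{G_{n+1}}$ we see that $e^{G_{n+1} - F_{n+1}} = \lambda_{n+1}/\mu_{n+1}$ is a nonzero constant; differentiating this equation gives $G_{n+1}' = F_{n+1}'$, so $G_{n+1} - F_{n+1} \equiv c$ for some $c \in \C$, with $\mu_{n+1} = \lambda_{n+1} e^{-c}$. Since by hypothesis $G_{n+1} = \mu_{n+2} e^{G_{n+2}}$, the function $G_{n+1}$ must itself be nowhere zero. But if $c \neq 0$, the previous paragraph provides some $z_0$ with $F_{n+1}(z_0) = -c$, whence $G_{n+1}(z_0) = 0$, a contradiction. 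Therefore $c = 0$, i.e. $G_{n+1} = F_{n+1}$ and $\mu_{n+1} = \lambda_{n+1}$, closing the induction.

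The main obstacle, as noted above, is exactly the naive one-parameter ambiguity $(F_{n+1}, \lambda_{n+1}) \mapsto (F_{n+1} + c, \lambda_{n+1} e^{-c})$ present at each individual level. Uniqueness is not a local algebraic fact about the functional equation but emerges only by exploiting the global requirement that every member of the sequence be itself a nowhere-vanishing exponential; through Picard this forces the omitted-value set of each $F_n$ to be exactly $\{0\}$, ruling out every shift $c \neq 0$.
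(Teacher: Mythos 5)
Your proof is correct and takes essentially the same route as the paper: propagate equality inductively, reduce the ambiguity at each level to a shift $G_{n+1} = F_{n+1} + c$, and rule out $c\neq 0$ using the fact that each $F_{n+1}$, $G_{n+1}$ is a non-vanishing transcendental entire function. The paper phrases the final step by factoring $F_{n+1}-G_{n+1}$ one more level down in terms of $F_{n+2}$, $G_{n+2}$ rather than citing Picard by name, but the underlying rigidity is identical, and your explicit appeal to Picard's little theorem is arguably the cleaner presentation.
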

\begin{proof} Suppose there are sequences of non-zero complex numbers $(\lambda_n)$, $(\mu_n)$ and entire functions $(F_n)$, $(G_n)$ 
satisfying $$F_n(z)=E_{\lambda_{n+1}}({F_{n+1}(z)}),\qquad G_n(z)=E_{\mu_{n+1}}({G_{n+1}(z)})$$ for all $n\geq 0$ where $G_0= F_0$.

We will prove that for any given $n\geq0$ the equality $G_n= F_n$ implies that $\mu_{n+1}=\lambda_{n+1}$ and $G_{n+1}= F_{n+1}$.

If $G_n= F_n$ then by the definition $E_{\lambda_{n+1}}({F_{n+1}(z)})=E_{\mu_{n+1}}({G_{n+1}(z)})$ and 
$$e^{F_{n+1}(z)-G_{n+1}(z)}=\frac{\mu_{n+1}}{\lambda_{n+1}}$$
hence $F_{n+1}(z)-G_{n+1}(z)\equiv C\in\C$. On the other hand we have
\begin{align*}
F_{n+1}(z)-G_{n+1}(z)&=E_{\lambda_{n+2}}({F_{n+2}(z)})-E_{\mu_{n+2}}({G_{n+2}(z)})\\
&=\lambda_{n+2}e^{F_{n+2}(z)}\left(1-\frac{\mu_{n+2}}{\lambda_{n+2}}e^{G_{n+2}(z)-F_{n+2}(z)}\right).
\end{align*}
Since entire functions $F_j$ and $G_j$ are non-constant for all $n\leq j\leq n+2$ it follows from  
$$\lambda_{n+2}e^{F_{n+2}(z)}\left(1-\frac{\mu_{n+2}}{\lambda_{n+2}}e^{G_{n+2}(z)-F_{n+2}(z)}\right)=C,$$
that $C=0$ and $G_{n+1}= F_{n+1}$, hence $\lambda_{n+1}=\mu_{n+1}$.
\end{proof}

Let $(K_j)_{j\geq 0}$ be a compact exhaustion of  $\C$ and let $0<\epsilon<1$. For non-constant functions $F_0, G_0 \in \mathcal{E}$ we define:

\begin{equation}\label{metric}
d(F_0,G_0):=\sum_{k=0}^{\infty}\frac{\|F_k-G_k\|_{K_k}}{1+\|F_k-G_k\|_{K_k}}\epsilon^{k},
\end{equation}

where $(F_k)$ and $(G_k)$ are as in Lemma \ref{unique}, associated sequences to $F_0$ and $G_0$ respectively. Observe that $d(\cdot,\cdot)$ is a well defined metric on $\mathcal{E}\backslash\C$.

\begin{lemma}\label{complete} Let $(F^j_0)\in\mathcal{E}\backslash \C$ be a Cauchy sequence with respect to the metric $d$. Then $(F^j_0)$
converges uniformly on compacts to an entire function $F_0 \in\mathcal{E}$. \footnote{Here the superscript $F_0^j$ is used to denote the element of the sequence and should not be confused with the iterate of the function.} 
\end{lemma}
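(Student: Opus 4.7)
The plan is to extract from the $d$-Cauchy hypothesis uniform convergence of each auxiliary sequence $(F^j_k)_j$ on the compact $K_k$, and then to bootstrap this to uniform convergence on arbitrary compacts of $\C$ by exploiting the chain of relations $F^j_k=E_{\lambda^j_{k+1}}\circ F^j_{k+1}$ together with the fact that the exponential behaves well under uniform convergence on bounded sets.

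The first step is essentially formal. Each summand in $d(F^i_0,F^j_0)$ is non-negative and dominated by $d(F^i_0,F^j_0)$, so the Cauchy hypothesis forces $\|F^i_k-F^j_k\|_{K_k}\to 0$ for every $k\geq 0$; hence $(F^j_k)_j$ is uniformly Cauchy on $K_k$ with some continuous limit $F^*_k$. Moreover, fixing any point $z_0\in K_0\subseteq K_k$, the identity $\lambda^j_{k+1}=F^j_k(z_0)\,e^{-F^j_{k+1}(z_0)}$ shows that the scalars $\lambda^j_{k+1}$ themselves converge, to a complex number $\lambda_{k+1}$.

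The main step is to upgrade the convergence of $(F^j_k)_j$ from $K_k$ to an arbitrary compact $L\subset\C$. Given such an $L$, I would choose $n\geq k$ with $L\subset K_n$, so that $(F^j_n)_j$ is uniformly Cauchy on $L$, and write
\begin{equation*}
F^j_k=E_{\lambda^j_{k+1}}\circ E_{\lambda^j_{k+2}}\circ\cdots\circ E_{\lambda^j_n}\circ F^j_n.
\end{equation*}
A short finite induction on $m=n,n-1,\ldots,k+1$ then does the work: if a sequence $(G^j)$ of continuous functions converges uniformly on $L$ to a (necessarily bounded) continuous $G$, and $\lambda^j_m\to\lambda_m$, then $\lambda^j_m e^{G^j}\to \lambda_m e^{G}$ uniformly on $L$, because uniform boundedness of $G^j(L)$ confines the exponential arguments to a fixed compact set on which $e^w$ is Lipschitz. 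Applying this $n-k$ times yields uniform convergence of $(F^j_k)_j$ on $L$.

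Consequently each $F^j_k$ converges uniformly on compacts to an entire function $F_k$ (the uniform limit on compacts of entire functions being entire), and in particular $F^j_0\to F_0$ uniformly on compacts. Passing to the limit in $F^j_k=E_{\lambda^j_{k+1}}\circ F^j_{k+1}$ yields $F_k=E_{\lambda_{k+1}}\circ F_{k+1}$ for every $k\geq 0$, which exhibits $F_0$ as an element of $\mathcal{E}$; note that if one of the limits $\lambda_{k+1}$ happens to vanish, then $F_0$ is merely forced to be constant, which is still consistent with $F_0\in\mathcal{E}$. The principal obstacle is really the third step: the metric $d$ only gives control of $F^j_k$ on the single compact $K_k$, so the whole game is to propagate convergence along finitely many exponential layers, and this is precisely why the convergence of the scalars $\lambda^j_m$ must be established first.
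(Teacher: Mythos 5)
Your proof is correct and follows essentially the same strategy as the paper's: extract uniform Cauchy-ness of $(F^j_k)_j$ on $K_k$ from the metric, verify that the scalars $\lambda^j_{k+1}$ converge, and then propagate convergence through the chain of exponential layers to arbitrary compacts before passing to the limit in $F^j_k=E_{\lambda^j_{k+1}}\circ F^j_{k+1}$. The only cosmetic difference is that you obtain the convergence of $\lambda^j_{k+1}$ by pointwise evaluation at a single $z_0\in K_0$ and then run a finite composition of $n-k$ exponentials, whereas the paper interleaves the $\lambda$-convergence into a single ``$V_1\subset V_2$'' propagation step using the identity $|F_k^j(z)-F_k^{j+1}(z)|=|e^{F^{j+1}_{k+1}(z)}|\,|\lambda_{k+1}^j e^{F_{k+1}^j(z)-F_{k+1}^{j+1}(z)}-\lambda_{k+1}^{j+1}|$; both are the same idea.
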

\begin{proof}
Let $(F^j_0)\in\mathcal{E}$ be a sequence of non-constant functions and let $(\lambda_k^j)$ and $(F_k^j)$ be  sequences of non-zero complex numbers and entire functions associated to each $F_0^j$ given by Lemma \ref{unique}. Since $(F^j_0)$ is a Cauchy sequence with respect to the metric $d$  it follows that for every $k\geq0$ the sequence $(F_k^j)$ converges uniformly on $K_k$ to a holomorphic function $F_k$ as $j\rightarrow \infty$. A priori functions $F_k$ may not be defined outside $K_k$, but since $F_k^j(z)=E_{\lambda^j_{k+1}}(F^j_{k+1}(z))$ we will see that $F_k$ are in fact entire functions satisfying  $F_k(z)=E_{\lambda_{k+1}}({F_{k+1}(z)})$ for all $k\geq 0$, hence $F_0\in \mathcal{E}$.

Let $V_1\subset V_2$ be any compact sets in $\C$ and let $k\geq 0$.  It is sufficient to prove that if  $(F_k^j)$ converges uniformly to $F_k$ on $V_1$ and if $(F_{k+1}^j)$ converges uniformly to $F_{k+1}$ on $V_2$, then also  $(F_k^j)$ converges uniformly  to $F_k$ on $V_2$. Since  $F_k^j(z)=E_{\lambda^j_{k+1}}(F^j_{k+1}(z))$ for all $z\in\C$ we have 
$$|F_k^j(z)-F_k^{j+1}(z)|=\left|e^{F^{j+1}_{k+1}(z)}\right|\cdot\left|\lambda_{k+1}^je^{F_{k+1}^j(z)-F_{k+1}^{j+1}(z)}-\lambda_{k+1}^{j+1}\right|.$$
Since $(F_k^j)$ and $(F_{k+1}^j)$ both converge uniformly on $V_1$ this implies that the sequence $(\lambda_{k+1}^j)$ converges to some $\lambda_{k+1}\in\C$  as $j\rightarrow \infty$. Then since $(F_{k+1}^j)$ also converge uniformly on $V_2$ the above equality implies that also $(F_k^j)$ converges uniformly on $V_2$, hence 
$F_k(z)=E_{\lambda_{k+1}}(F_{k+1}(z))$ on $V_2$.

\end{proof}

Clearly Lemma \ref{unique} does not apply to constant functions as they can be represented by many different sequences $(\lambda_n)$ and $(F_n\equiv 1)$, hence the metric $d$ is not well defined for constant functions.  Next example shows that  our metric $d$ can not be extended over the entire class $\mathcal{E}$.  
\medskip

{\bf Example 1:} Let $(\lambda_n)$ and $(\mu_n)$ be two sequences of non-zero complex numbers where $\lambda_1\neq\mu_1$. Let $F_0,G_0\in \mathcal{E}$ be their associated functions given by Theorem \ref{main2}. It follows from the construction of these functions that  $F_0(0)=G_0(0)=1$. By the definition $F_0(z)=\lambda_1e^{F_1(z)}$ and $G_0(z)=\mu_1e^{G_1(z)}$,
hence $F_1(0)\neq G_1(0)$.  If we define $f_j=F_0(j^{-1}z)$   and $g_j=G_0(j^{-1}z)$ then clearly 
$f_j,g_j\in\mathcal{E}$ and $f_j,g_j\rightarrow 1$ as $j\rightarrow\infty$ but   $\lim_{j\rightarrow \infty} d\left(f_j,g_j\right)\neq 0.$  
\vspace{0.5cm}

Let $\mu\in\C^*$ and $\lambda=\mu e^{-\mu}$. We define the map $\Phi_\mu:\mathcal{E}\backslash \C\rightarrow \mathcal{E}\backslash \C$ as
$$\Phi_\mu(F(z)):=E_{\lambda}(F(\mu^{-1}z)).$$
Observe that for $|\mu|>1$  the fixed points of $\Phi_\mu$ are precisely the Poincar\'e functions \eqref{poincare1}.

\begin{proposition} Let $\mu\in\C$ satisfy $|\mu|>1$ and let $F\in \mathcal{E}\backslash \C$ satisfy $F(0)=\mu$ and $F'(0)\neq0$. The sequence of iterates $(\Phi_\mu^n(F))$ converges uniformly on compacts to the Poincar\'e function $f$ of  $E_{\lambda}$.
\end{proposition}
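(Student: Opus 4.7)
First I verify by an easy induction on $n$ that
\[ \Phi_\mu^n(F)(z) = E_\lambda^n\!\bigl(F(\mu^{-n}z)\bigr), \qquad n\ge 0, \]
using $\Phi_\mu^{n+1}(F)(z) = E_\lambda(\Phi_\mu^n(F)(\mu^{-1}z)) = E_\lambda(E_\lambda^n(F(\mu^{-n-1}z)))$. Thus $\Phi_\mu^n(F)(z)$ is obtained by applying $E_\lambda^n$ to a point that converges to the repelling fixed point $\mu$ at geometric rate $|\mu|^{-n}$, and the natural candidate for the limit is furnished by the Poincar\'e functional equation applied to a correctly linearised input.

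Since the Poincar\'e function of $E_\lambda$ at $\mu$ is unique up to precomposition by a nonzero dilation and $F'(0)\neq 0$, I fix the normalisation $f$ with $f(0)=\mu$ and $f'(0)=F'(0)$. Iterating $f(\mu z)=E_\lambda(f(z))$ on all of $\C$ yields $E_\lambda^n(f(w))=f(\mu^n w)$ for every $w\in\C$. The key move is to use $f$ as a local chart near $\mu$: because $f'(0)\neq 0$, there is a neighbourhood $V$ of $\mu$ and a holomorphic inverse branch $g:V\to\C$ with $g(\mu)=0$ and $g'(\mu)=1/F'(0)$. Fix a compact set $K\subset\C$. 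Since $F$ is continuous at $0$ with $F(0)=\mu$, for all sufficiently large $n$ and every $z\in K$ we have $F(\mu^{-n}z)\in V$; setting $w_n(z):=g(F(\mu^{-n}z))$ we obtain
\[ \Phi_\mu^n(F)(z) = E_\lambda^n\!\bigl(f(w_n(z))\bigr) = f\!\bigl(\mu^n w_n(z)\bigr). \]

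It remains to show that $\mu^n w_n(z)\to z$ uniformly on $K$. The Taylor expansions
\[ F(\mu^{-n}z)-\mu = F'(0)\mu^{-n}z + O(|\mu|^{-2n}), \qquad g(\mu+u)=\frac{u}{F'(0)}+O(u^2), \]
uniform in $z\in K$ and in small $u$, combine to give $w_n(z)=\mu^{-n}z+O(|\mu|^{-2n})$, whence $\mu^n w_n(z)=z+O(|\mu|^{-n})$ uniformly on $K$. Since $f$ is entire, and thus uniformly continuous on compacta, this yields $f(\mu^n w_n(z))\to f(z)$ uniformly on $K$, completing the proof.

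The only step that really needs care is book-keeping the normalisation of $f$: matching the $1$-jets $(f(0),f'(0))=(\mu,F'(0))$ is precisely what makes the $O(|\mu|^{-2n})$ error at the input to $E_\lambda^n$ shrink to an $O(|\mu|^{-n})$ error at the output after being amplified by the $\mu^n$ coming from the linearisation. The hypothesis $F'(0)\neq 0$ is used to construct the local inverse $g$ and is essentially sharp, since without it the iteration should degenerate to the constant $\mu$.
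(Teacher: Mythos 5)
Your proof is correct. The underlying idea is the same as the paper's (reduce the iteration to a linearisation at the repelling fixed point), but the implementation is cleanly different. The paper conjugates to $\phi = F^{-1}\circ E_\lambda\circ F$, applies the local K\oe nigs theorem to $\phi$ at $0$, exhibits the candidate limit as $F\circ\psi^{-1}$ on a small disk $\Delta_r$, and then needs a separate extension trick (iterating $\Phi_\mu$ and using the semiconjugacy $\Phi_\mu^k(g_n)=g_{n+k}$) to pass from convergence on $\Delta_r$ to convergence on all of $\C$. You instead observe at once that $\Phi_\mu^n(F)(z)=E_\lambda^n(F(\mu^{-n}z))$, use the \emph{global} Poincar\'e functional equation $E_\lambda^n\circ f = f\circ\tau_{\mu^n}$ on all of $\C$, and only need a \emph{local} inverse $g$ of $f$ near $\mu$ to re-express the shrinking input. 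Because $f$ is entire, the estimate $\mu^n w_n(z)=z+O(|\mu|^{-n})$ immediately gives uniform convergence on an arbitrary compact, with no extension step. The two arguments are really the same change of variables in disguise (one checks that $\psi = g\circ F$ is precisely the normalised K\oe nigs linearizer of $\phi$, and the paper's limit $F\circ\psi^{-1}$ equals your normalised Poincar\'e function with $f(0)=\mu$, $f'(0)=F'(0)$), but your version avoids introducing $\phi$ and $\psi$ entirely and does the book-keeping of the $1$-jet normalisation explicitly, which also makes it clear exactly which representative of the dilation class of Poincar\'e functions the iteration converges to.
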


\begin{proof} Let $\mu$ and $F\in \mathcal{E}\backslash \C$ be a as in the proposition. Note that such function  exists by Theorem \ref{main2}. By $F^{-1}$ we denote a local inverse of $F$ defined on some neighbourhood of $\mu$  which satisfies $F^{-1}(\mu)=0$.  Let $\lambda=\mu e^{-\mu}$  and define
$$\phi(z)=(F^{-1}\circ E_\lambda \circ F)(z).$$
Note that $0$ is a repelling fixed point of $\phi$ with a multiplier $\mu$, hence there exists $r>0$ and a 
K\oe nigs function $\psi(z)=z+O(z^2)$ defined on closed disk $\Delta_r$ satisfying $\psi\circ\phi(z)=\mu\cdot\psi(z)$ for all $z\in \Delta_r$ \cite[Theorem 8.2]{milnor}.

Next we define 
$g_n(z)=\Phi_\mu^n(F(z))$ and observe that $g_n(z)=g_{n-1}\circ \tau_{n-1}\circ\phi\circ\tau_{n}^{-1}(z)$ 
where $\tau_n(z)=\mu^n(z)$. It follows that 
$$g_n(z)=F\circ\phi^n(\mu^{-n}z)=F\circ\psi^{-1}(\mu^n\cdot \psi( \mu^{-n} z)),$$
hence $g_n\rightarrow f:= F\circ\psi^{-1}$ uniformly on $\Delta_r$ as $n\rightarrow\infty$. Quick computation shows that $\Phi_\mu(f)=f$ on $\Delta_r$. We can extend the function $f$ holomorphically to the entire complex plane by the following simple trick. Let $w\in\C$ and let $n$ be so large that $\mu^{-n}w\in \Delta_r$, then we define $f(w)=\Phi_\mu^n(f(w))$. Since  $\Phi_\mu^k(g_n)=g_{n+k}$ one can easily deduce that the sequence $(g_n)$ also converges uniformly on compacts  to an entire function $f$ satisfying $\Phi_\mu(f)=f$, hence it is a Poincar\'e function.
\end{proof}

\begin{remark} If we assume that $F$ in the above proposition is a linear function, then this is a classical construction of a Poincar\'e function. We have seen that adding non-linear terms does not have any effect on the limiting function. The aim of introducing this proposition is to give an example how metric $d$ defined in \eqref{metric} and Lemma \ref{complete} could be used in certain constructions. We will illustrate this in the following paragraph.  
\end{remark}

{\it Alternative proof of Proposition 6.} Let $g_n$, $\tau_n$ and $\phi$ be defined as above. Define
$u_n=\tau_n\circ\phi\circ\tau^{-1}_{n+1}$ and observe that if $u_n$ is a linear function then $F$ is already a Pioncar\'e function. Assume that this is not the case, then let $k\geq 2$ be the smallest integer for which $u_n^{(k)}(0)\neq0$. Then there exists $a\neq0$ and $C,r>0$ such that 
$$|u_n(z)-z-a\mu^{-(k-1)n-k}z^k|\leq C|z^{k+1}||\mu^{-kn}|$$
 for all $z\in\Delta_r$ and all $n\geq0$. Next observe that $g_{n+1}=g_n\circ u_n$ and $g_{n-1}=g_n\circ u^{-1}_{n-1}$, hence we can write
$$\frac{|g_{n+1}(z)-g_{n}(z)|}{|g_{n}( z)- g_{n-1}(z)|}=\frac{\frac{|g_{n}(u_n(z))-g_{n}(z)|}{|u_{n}(z)- z|}}{\frac{|g_{n}(z)-g_{n}(u_{n-1}^{-1}(z))|}{| z- u_{n-1}^{-1}(z)|}}\frac{|u_{n}(z)- z|}{| z- u_{n-1}^{-1}(z)|}.$$
Let $\delta$ be so small that $\frac{1+\delta}{\mu}<1$ and recall that by the initial assumption we have $g_n'(0)=F'(0)\neq0$ for all $n\geq 0$. Therefore for every sufficiently small $r>0$
we have  
$$
\left|\frac{g_{n+1}(z)-g_{n}(z)}{g_{n}( z)- g_{n-1}(z)}\right|<\frac{1+\delta}{\mu}$$
for all $z\in \Delta_r$ and all $n\geq 0$. Furthermore there exists $0<\rho<1$ such that 
$$\frac{\|g_{n+1}-g_{n}\|_{\Delta_r}}{1+\|g_{n+1}-g_{n}\|_{\Delta_r}}\leq\rho\cdot \frac{\|g_{n}-g_{n-1}\|_{\Delta_r}}{1+\|g_{n}-g_{n-1}\|_{\Delta_r}} $$
for all $n\geq 0$.  Finally let $\epsilon>0$ be so small that $\theta:=\rho+\epsilon<1$, and let $K_n=\Delta_{r\cdot\mu^n}$ be a compact exhaustion  of $\C$.  Using this exhaustion $(K_n)$ end $\epsilon$ we define the metric $d$ as in \eqref{metric} and it is easy to verify that 
\begin{align*}d(g_{n+1},g_{n})&=\frac{\|g_{n+1}-g_{n}\|_{\Delta_r}}{1+\|g_{n+1}-g_{n}\|_{\Delta_r}}+\epsilon \cdot d(g_n,g_{n-1})\\
&\leq \rho\cdot \frac{\|g_{n}-g_{n-1}\|_{\Delta_r}}{1+\|g_{n}-g_{n-1}\|_{\Delta_r}}+\epsilon \cdot d(g_n,g_{n-1})\\
& \leq\theta \cdot d(g_n,g_{n-1}),
\end{align*}

 for all $n\geq0$. Since $0<\theta<1$ it follows that $(g_n)$ is a Cauchy sequence and hence by Lemma \ref{complete} converges uniformly on compacts to an entire function $f \in \mathcal{E}$ which satisfies $\Phi_\mu(f)=f$. Since $g_n'(0)=F'(0)\neq0$ for all $n\geq 0$ the limiting function $f$ is non-constant, hence it is a Poincar\'e function. $\Box$

\vspace{0.5cm}

We finish this section with the following example, showing that there are many functions in $\mathcal{E}$ whose Fatou set is non-empty. 

\medskip 

{\bf Example 2:} Let us show that for every $\lambda$ non-zero complex number there exist a function $f\in\mathcal{E}$ such that $f(1)=1$ and $f'(1)=\lambda$. Take any non-constant function $F_0=e^{F_1}\in\mathcal{E}$  and define $g_t(z)=\frac{F_0(tz)}{F_0(t)}$. Observe that $g_t(1)=1$ and that $g_t'(1)=\frac{F'_0(t)\cdot t}{F_0(t)}=F_1'(t)\cdot t$. Let $\mu_1$ and $\mu_2$ be solutions of the equation $z^2-\lambda=0$. Since $g_t'(1)$ is a non-constant entire function with respect to $t$ we know that it can omit at most one value, hence we may assume that there is a $t_0$ such that $g'_{t_0}(1)=\mu_1$. Finally we define  $f= g_{t_0}\circ g_{t_0}$ and it should be clear that this function satisfies desired properties. Note that  if $F_0\in\mathcal{B}$ then also $f\in\mathcal{B}$.

\section{Proof of Theorem \ref{main1}}

Let $V$ be a closed set containing $0$ and at least one more point. Let ${a_n}$ be an infinite sequence of points from $V$ which forms a dense subset of $V$, where $a_0=0$ and $a_n\neq0$ for $n\neq0$ (we allow points to repeat). For fixed $\lambda_1\ldots\lambda_{k-1}$ the function $E_{(0,k)}$ is a non-vanishing holomorphic entire function with respect to parameter $\lambda_n$, hence there exists a complex number $\lambda_n$ for which $E_{(0,k)}(0)=a_k$. By inductive procedure we can conclude that there exists a sequence of non-zero complex numbers $(\lambda_n)$  such that  $\{E_{(0,k)}(0)\mid k\geq 0\}$ is a dense subset of $V$. By  Theorem \ref{main2} there exists a sequence of locally univalent entire functions $(F_n)$ satisfying $F_n(z)=E_{\lambda_{n+1}}(F_{n+1}(z))$ for all $n\geq 0$, hence it follows that 
\begin{equation}\label{exp}
 F_{0}(z)=E_{0,n}(F_n(z))
\end{equation}
for all $n\geq 0$. The function $F_{0}$ is locally univalent therefore its set of critical values is empty and hence $S(F_{0})=\overline{AV(F_{0})}$. From  (\ref{eq}) and \eqref{exp} it follows that
$$\{E_{(0,k)}(0)\mid k\geq 0\}\subseteq AV(F_{0})$$
hence $V\subseteq S(F_0)$.

Recall $F_0$ is a uniform limit of a sequence of entire functions 
\begin{equation}\label{f}
f_{0,n}(z)=E_{(0,n)}\left(b_{n}+ \frac{z}{b_0\cdots b_{n-1}}\right)
\end{equation}
where $b_0:=1$ and $b_n:=2\pi m_n i+ \log b_{n-1}-\log{\lambda_{n}}$ and where the sequence of integers $(m_n)$ increases sufficiently fast. 

In what follows we will prove that for any sufficiently fast increasing sequence $(m_n)$  the set $\{E_{(0,k)}(0)\mid k\geq0\}$ is  dense in $AV(F_{0})$, and therefore $S(F_0)=V$.

\medskip

Before proceeding with the proof we need to introduce some additional notation. Since $E_\lambda:\C\rightarrow \C^*$ is a covering map, every point $z\in\C^*$ has a neighbourhood $U$ on which inverse branches of $E_\lambda$ are well defined univalent functions that can be expressed as
 $L_\lambda^k(z):=\log z- \log\lambda +2k\pi i$ (we always assume that $\log 1=0$). Let $I_n= (k_1,\ldots ,k_n)\in\mathbb{Z}^n$ and define 
$$
L_{I_n}(z):= L_{\lambda_n}^{k_n}\circ\ldots\circ L_{\lambda_1}^{k_1}(z).$$

Observe that the function $f_{0,n}$ is a covering map over $\mathbb{C}\backslash\{E_{0,k}(0)\mid 0\leq k<n \}$ hence  every $z\in \mathbb{C}\backslash\{E_{0,k}(0)\mid 0\leq k<n \}$ has a neighbourhood $U$ on which the inverse branches of $f_{0,n}$ are well defined and can be expressed as
$$g_{I_n}(z):=\left(L_{I_n}(z)-b_n\right)b_0\cdots b_{n-1},$$
where $I_n\in\mathbb{Z}^n$ and  $b_n$ are the same as in the definition of $f_{0,n}$.

\medskip

Let us define $\Omega:=\mathbb{C}\backslash\overline{\{E_{(0,k)}(0)\mid  k\geq 0\}}$ and recall that for every $n>0$ the function $f_{0,n}$ is a covering map over $\Omega$. Observe that every point $\zeta\in \Omega$ has a neighbourhood $U_{\zeta}\subset \Omega$ on which all inverse branches of $f_{0,n}$ are well defined univalent functions for all $n>0$.  Clearly this does not hold for points of the form $\zeta=E_{(0,\ell)}(0)$ since for all  $I_n=(0,\ldots ,0,m_{\ell+1},\ldots, m_n )$ with $n>\ell$ the inverse branch $g_{I_n}$ is not defined in $\zeta$. Observe that for  $\zeta\in\Omega$ the set $U_\zeta$ does not depend on the choice of the sequence $(m_n)$ in the definition of functions $f_{0,n}$.
\medskip

Since the sequence of locally univalent functions $f_{0,n}$ converges uniformly to a non-constant locally univalent function $F_0$, it follows that for a proper choice of a sequence $(I_n)_n$ with $I_n\in\mathbb{Z}^n$, the inverse branches $g_{I_n}$ converge locally uniformly in $\C\backslash S(F_0)$ to the inverse branch $g$ of $F_0$ . The following lemma tells us for which sequences $(I_n)$ this actually happens.

\begin{lemma}\label{lemmaseq}Let $(I_n)_n$ be a sequence with $I_n\in\mathbb{Z}^n$ and let $(m_n)$ be the sequence of integers from the definition of functions \eqref{f}. Let $\zeta\in \Omega$ and let  $U_\zeta$ be as above. If the sequence $(g_{I_n})$ converges uniformly on $U_\zeta$ to a holomorphic function $g$ then there exists $j\geq0$ and integers $k_1,\ldots, k_j\in\mathbb{Z}$ such that 
\begin{equation} \label{sequence}
I_n=(k_1\ldots,k_j,m_{j+1},\ldots,m_n)
\end{equation}
for all $n>j$. 
\end{lemma}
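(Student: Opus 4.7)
The plan is to analyze, working from the last coordinate backwards, what the uniform convergence $g_{I_n}\to g$ forces on the multi-index $I_n=(k_1^{(n)},\dots,k_n^{(n)})$. Since $g_{I_n}(z)=(L_{I_n}(z)-b_n)\,b_0\cdots b_{n-1}$ and $|b_0\cdots b_{n-1}|\to\infty$, convergence forces $L_{I_n}(z)-b_n\to 0$ uniformly on $U_\zeta$. Writing $L_{I_n}=L_{\lambda_n}^{k_n^{(n)}}\circ L_{\tilde I_n}$ with $\tilde I_n=(k_1^{(n)},\dots,k_{n-1}^{(n)})$, and substituting $b_n=2\pi m_n i+\log b_{n-1}-\log\lambda_n$, this rearranges to the principal-branch identity
\[
\log L_{\tilde I_n}(z)-\log b_{n-1}=2\pi(m_n-k_n^{(n)})i+o(1).
\]

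The first step---and the main obstacle---is to force $k_n^{(n)}=m_n$ for $n$ large. Taking imaginary parts gives $\arg L_{\tilde I_n}(z)-\arg b_{n-1}=2\pi(m_n-k_n^{(n)})+o(1)$, and since both arguments lie in $(-\pi,\pi]$, boundedness of the left-hand side already restricts $m_n-k_n^{(n)}\in\{-1,0,1\}$ for $n$ large. The sequence $(m_n)$ has been chosen sufficiently rapidly increasing, so $b_{n-1}\sim 2\pi m_{n-1}i$ and $\arg b_{n-1}\to\pi/2$; hence $m_n-k_n^{(n)}=\pm 1$ would force $\arg L_{\tilde I_n}(z)$ to leave $(-\pi,\pi]$, contradiction. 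This delicate but finite bookkeeping of argument ranges is the heart of the proof. With $k_n^{(n)}=m_n$ in hand, the identity reduces to $L_{I_n}(z)-b_n=\log(1+g_{\tilde I_n}(z)/(b_0\cdots b_{n-1}))$, from which one reads off $g_{\tilde I_n}(z)=g_{I_n}(z)(1+o(1))\to g(z)$. Applying the same argument to the shorter sequence $(\tilde I_n)$ and iterating produces an integer $n_0$ such that $k_\ell^{(n)}=m_\ell$ whenever $n_0\le\ell\le n$.

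It remains to stabilize the first $n_0-1$ coordinates. Setting $H_n:=L_{\lambda_n}^{m_n}\circ\cdots\circ L_{\lambda_{n_0+1}}^{m_{n_0+1}}$, the identities $H_n(b_{n_0})=b_n$ and $H_n'(b_{n_0})=1/(b_{n_0}\cdots b_{n-1})$ together with a first-order Taylor expansion (noting that $L_{K^{(n)}}(z)-b_{n_0}$ remains bounded on $U_\zeta$) give $g_{I_n}(z)=g_{K^{(n)}}(z)+o(1)$, where $K^{(n)}:=(k_1^{(n)},\dots,k_{n_0-1}^{(n)},m_{n_0})\in\mathbb{Z}^{n_0}$. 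Hence $g_{K^{(n)}}\to g$ uniformly on $U_\zeta$. But the $g_K$, $K\in\mathbb{Z}^{n_0}$, are the distinct inverse branches of the entire function $f_{0,n_0}$, so at any fixed $z\in U_\zeta$ their values form the preimage $f_{0,n_0}^{-1}(z)$, a discrete subset of $\C$ which, being infinite, accumulates only at $\infty$. A sequence inside such a set with finite limit must be eventually constant, so $K^{(n)}$ is eventually a fixed tuple $(k_1,\dots,k_{n_0-1},m_{n_0})$. Enlarging $j$ to absorb finitely many initial indices, this yields the lemma with $j=n_0-1$.
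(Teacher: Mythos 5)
Your approach differs from the paper's in a meaningful way, and the comparison is worth spelling out, but there is one place where I think the argument as written has a genuine gap.

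\textbf{Where you and the paper agree.} Your first two steps match the paper's Steps 1 and 2. In fact your treatment of Step 1 is more explicit than the paper's: the paper simply asserts that boundedness of the imaginary part times $|b_0\cdots b_{n-1}|\to\infty$ forces $k_n=m_n$ eventually, whereas you track the fact that $\arg b_{n-1}\to\pi/2$ to rule out $m_n-k_n^{(n)}=\pm1$. That is a genuine refinement and closes a small case the paper glosses over.

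\textbf{Where you diverge, and where the gap is.} The paper has a third ingredient you never use: a ``Step 3'' showing that if two sequences of inverse branches $g_{I_n}$ and $g_{J_n}$ (with $I_n,J_n\in\mathbb{Z}^n$) converge to the \emph{same} limit $g$, then $I_n=J_n$ for all large $n$, because distinct multi-indices give \emph{disjoint} images $g_{I_n}(U_\zeta)$. Combining Steps 1--3 once (applying Step 3 to $g_{I_n'}$ and $g_{I_{n-1}}$, both converging to $g$) yields $I_n'=I_{n-1}$ for $n$ large, which stabilizes \emph{all} coordinates in a single shot. You instead try to ``iterate'' Steps 1--2 to peel off the tail entries one by one, and assert that this ``produces an integer $n_0$ such that $k_\ell^{(n)}=m_\ell$ whenever $n_0\le\ell\le n$.'' This is exactly the conclusion you want for the tail, but it does not follow from iteration as stated: each application of Step 1 to a new truncated sequence $(\tilde I_n^{(j)})_n$ produces its own threshold $N_j$ past which the last entry equals $m_{\cdot}$, and the number of truncations needed grows with $n$. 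To get a single $n_0$ you must show that the $N_j$ are uniformly bounded, which requires (i) a uniform bound on $\sup_n\|g_{\tilde I_n^{(j)}}\|_{U_\zeta}$ over all $j$, established inductively from the Step~2 estimate and the summability of $1/|b_0\cdots b_k|$, and (ii) the observation that the Step 1 threshold depends only on this sup and on $(b_k)$. None of this is in your write-up. Relatedly, in your $H_n$/Taylor-expansion step you assert that $L_{K^{(n)}}(z)-b_{n_0}$ ``remains bounded on $U_\zeta$,'' but this is circular: you would know it once you know $g_{K^{(n)}}$ is bounded, which is what the Taylor step is supposed to establish.

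\textbf{What your alternative buys, once the gap is filled.} Your final discreteness argument (the values $g_K(z)$, $K\in\mathbb{Z}^{n_0}$, form the discrete fiber $f_{0,n_0}^{-1}(z)$, so a convergent sequence in it is eventually constant) is a clean substitute for the paper's Step 3 at the fixed level $n_0$, and arguably more elementary than tracking the disjointness of the sets $g_{I_n}(U_\zeta)$. But it only applies after you have reduced to a fixed level $n_0$, which is precisely what the un-justified iteration was supposed to deliver. The simplest repair is to borrow the paper's Step 3 to get $I_n'=I_{n-1}$ directly; alternatively, carry out the uniformity bookkeeping sketched above, after which your argument goes through.
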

\begin{proof}
{\bf Step 1:} \emph{If  $g_{I_n}$ converges to $g$ then $I_n=(k_1(n),\ldots,k_{n-1}(n),m_n)$ for all sufficiently large $n$.} 

Since  $g_{I_n}\rightarrow g$ uniformly on $U_\zeta$ and $f_{0,n}\rightarrow F_0$ uniformly on compacts it follows that $id_{U_\zeta}=f_{0,n}\circ g_{I_n}\rightarrow F_0\circ g $, hence $g$ is univalent function on $U_\zeta$ and $F_0\circ g =id_{U_\zeta}$. 
Given $I_n= (k_1,\ldots ,k_n)\in\mathbb{Z}^n$ we define $I_n':= (k_1,\ldots ,k_{n-1})$ and observe that
\begin{equation}\label{formula}
g_{I_n}(z)=(L_{I_n}(z)-b_n)b_0\cdots b_{n-1}=\left(\log\left(\frac{L_{I_n'}(z)}{b_{n-1}}\right)+2\pi i(k_n-m_n)\right)b_0\cdots b_{n-1}.
\end{equation}
The sequence $(g_{I_n})$ converges uniformly on $U_\zeta$ therefore the sequence
$$\left|Im\left(\log\left(\frac{L_{I_n'}(z)}{b_{n-1}}\right)+2\pi i(k_n-m_n)\right)\right|\cdot|b_0\cdots b_{n-1}|$$
must stay bounded on $U_\zeta$. Since  $|b_0\cdots b_{n-1}|\rightarrow \infty$ as $n\rightarrow\infty$ this can only happen if there exists  $n_0\geq 0$ such that $k_n=m_n$ for all $n> n_0$. 

\medskip
{\bf Step 2}: \emph{If  $g_{I_n}\rightarrow g$ uniformly on $U_\zeta$ then also $g_{I_n'}\rightarrow g$ uniformly on $U_\zeta$ where $I_n'=(k_1(n),\ldots,k_{n-1}(n))$.}

First observe that
\begin{equation}\label{gn}
g_{I_{n}}(z)=\left(L_{\lambda_{n}}^{k_{n}}\left(\frac{g_{I_n'}(z)}{b_0\cdots b_{n-2}}+b_{n-1}\right)-b_{n}\right)b_0\cdots b_{n-1}.
\end{equation}
This equation implies that
$$g_{I_{n}'}(z)=\left(\exp\left(\frac{g_{I_n}(z)}{b_0\cdots b_{n-1}}\right)-1\right)b_0\cdots b_{n-1}=g_{I_n}(z)+O\left(\frac{g_{I_n}^2(z)}{b_0\cdots b_{n-1}}\right).$$
Since the sequence $g_{I_n}\rightarrow g$ uniformly on $U_\zeta$   and $|b_0\cdots b_{n-1}|\rightarrow \infty$ it follows that $O\left(\frac{g_{I_n}^2(z)}{b_0\cdots b_{n-1}}\right) \rightarrow 0 $,  hence also the sequence  $g_{I_n'}\rightarrow g$ uniformly on $U_\zeta$. 
\medskip

{\bf Step 3:} \emph{Let $(I_n)$ and $(J_n)$ be two sequences for which both $g_{I_n}$ and $g_{J_n}$ converge uniformly on $U_\zeta$ to the univalent function $g$, then $I_n=J_n$ for all sufficiently large $n$.}

For every $n$ functions $g_{I_n}$ and $g_{J_n}$ are inverse branches of $f_{0,n}$ on $U_\zeta$, hence we know that that 
\begin{equation}\label{g}
g_{I_n}(U_\zeta)\cap g_{J_n}(U_\zeta)\neq\emptyset\iff I_n=J_n.
\end{equation}
Since both sequences converge to the same univalent function $g$, it follows that  for all  sufficiently large $n$ sets $g_{I_n}(U_\zeta)$ and $g_{J_n}(U_\zeta)$ are small perturbation of the set $g(U_\zeta)$. Therefore there exists $n_0\geq0$ such that  
$g_{I_n}(U_\zeta)\cap g_{J_n}(U_\zeta)\neq\emptyset$ for all $n>n_0$. 
\medskip

Let us summarize what we have just proven. If  $g_{I_n}$ converges to $g$ then 
$I_n=(k_1(n),\ldots,k_{n-1}(n),m_n)$ for all sufficiently large $n$. Next we have seen that also $g_{I_n'}$ converges to $g$ where $I_n'=(k_1(n),\ldots,k_{n-1}(n))$. Finally we have proven that since $(g_{I_n})$ and $(g_{I_n'})$ converge to the same limit map $g$  this means that $I_n'=I_{n-1}$ for all sufficiently large $n$, hence we obtain \eqref{sequence}.
\end{proof}
\medskip

It remains to prove that if the sequence of integers $(m_n)$ increases sufficiently fast (note that we can always achieve this, see the proof of Theorem \ref{main2}) then for every point $\zeta\in\Omega$ and for every  sequence $(I_n)$ of the form \eqref{sequence} the sequence $(g_{I_n})$ converges uniformly on $U_\zeta$ to a univalent holomorphic function.
\medskip

First we define
$$\mathcal{I}_n:=\{(\ell_1,\ldots,\ell_{n-1},m_n)\in\mathbb{Z}^n\mid |\ell_j|\leq m_{n-1} \text{ for all } 0 < j<n \}$$
and observe that since $m_n\rightarrow\infty$ as $n\rightarrow\infty$  it follows that for every sequence $(I_n)$ of the form \eqref{sequence} there exists $n_0\geq 0$  so that $I_n\in \mathcal{I}_n$ for all $n>n_0$. 

Let $K_1\subset K_2\subset\ldots\subset\bigcup_{n\geq 1}K_n=\Omega$ be an exhaustion by compacts and observe that for any $n>0$ there exists finitely points  
$\zeta^n_1,\ldots,\zeta_{j_n}^n \in K_n$ such that $K_n\subset\bigcup_{k=1}^{j_n}U_{\zeta^n_k}$. Since for every $n>0$ the set $\mathcal{I}_n$ is finite and since $U_\zeta$ does not depend on $I\in \mathcal{I}_n$ it follows that there exists a constant $C_n>0$ such that $\|g_{I}\|_{U_{\zeta^n_k}}<C_n$ for all $I\in\mathcal{I}_n$ and for all $1\leq k\leq j_n$, hence we can write 
$$\|g_{I}\|_{K_n}<C_n.$$  
 Let $I_n=(\ell_1,\ldots,\ell_{n-1},m_n)\in \mathcal{I}_n$ and observe as in \eqref{formula} that on each $U_{\zeta^n_k}$ we have
$$g_{I_n}(z)=\log\left(\frac{L_{I_n'}(z)}{b_{n-1}}\right)b_0\cdots b_{n-1}$$
where $I_n'=(\ell_1,\ldots,\ell_{n-1})$. Since there is no $b_n$ in the above expression it is clear that $C_n$ does not depend on $m_n$, hence for every $\epsilon>0$ there exists $N_{\epsilon}>0$ so that  
\begin{equation}\label{eq1}\frac{\|g_{I_n}\|_{K_n}}{|b_0\cdots b_n|}< \frac{C_n}{|b_n|}<\epsilon
\end{equation}
 for all  $m_n>N_\epsilon$.

\medskip

Finally let  $I_n=(k_1,\ldots,k_n)\in\mathbb{Z}^n$ be the sequence of the form \eqref{sequence}, hence there exists $n_0\geq0$ such that $k_n=m_n$ and  $I_{n+1}'=I_n\in \mathcal{I}_n$ for all $n>n_0$. Assuming that $m_n$ are sufficiently large it follows from \eqref{eq1} and \eqref{gn}  that  for all $n>n_0$ we have 
\begin{equation}\label{gn2}
\|g_{I_{n+1}}-g_{I_n}\|_{K_n}=\left\|b_0\cdots b_n \log\left(1+\frac{g_{I_n}(z)}{b_0\cdots b_n}\right)-g_{I_n}(z)\right\|_{K_n}\leq  2^{-n}.
\end{equation}
Note that this computation is actually made on each set $U_{\zeta^n_k}$ for $1\leq k\leq j_n$ separately, hence the bound holds on entire $K_n$.

\medskip
This proves that for every $\zeta\in\Omega$ the inverse branches $g_{I_n}$ (where the sequence $(I_n)$ has to be of the form \eqref{sequence} by Lemma \ref{lemmaseq}) of $f_{0,n}$ converge uniformly on $U_\zeta$ to the inverse branch of $F_0$ on $U_\zeta$ (note that every inverse branch of $F_0$ on $\Omega$ can be obtained this way), hence 
$$S(F_0)\subseteq \overline{\{E_{(0,k)}(0)\mid  k\geq 0\}}=V.$$

Recall that in the first paragraph of this section we have already proven $V\subseteq S(F_0)$, hence we finally obtain $S(F_0)=V$.  This completes the proof of Theorem \ref{main1}.

\section{Concluding remarks}

The following two corollaries are immediate consequences of Theorem \ref{main1}.

\begin{corollary} Given any closed set $U$ containing $0$, the family $Exp=\{E_{\lambda}(z)\mid \lambda\in\C\}$ lies in the closure
 of $\mathbb{S}_U=\{f\in\mathcal{O}(\C)\mid S(f)=U\}$.
\end{corollary}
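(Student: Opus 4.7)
The plan is to leverage Theorem \ref{main1} as a blackbox, since $E_\lambda$ for $\lambda\neq 0$ is just a translation of $E_1$, and $E_0\equiv 0$ can be reached by shrinking $\lambda$. First I would handle the generic case where $U$ has at least two points, so that Theorem \ref{main1} applies, and then dispatch the degenerate case $U=\{0\}$ separately.

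For $\lambda\in\C^*$, pick any branch $\mu=\log\lambda$ and observe the identity $E_\lambda(z)=E_1(z+\mu)$. Given a compact $K\subset\C$ and $\epsilon>0$, Theorem \ref{main1} applied to the compact $K+\mu$ produces a locally univalent $f\in\mathcal{E}$ with $S(f)=U$ and $\|f-E_1\|_{K+\mu}\le \epsilon$. The translate $g(z):=f(z+\mu)$ is entire with $S(g)=S(f)=U$ (translation is a bijective biholomorphism of $\C$, so it preserves critical and asymptotic values), hence $g\in\mathbb{S}_U$, and $\|g-E_\lambda\|_K=\|f-E_1\|_{K+\mu}\le\epsilon$. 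This shows $E_\lambda\in\overline{\mathbb{S}_U}$ for every $\lambda\neq 0$.

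For $\lambda=0$, the function $E_0\equiv 0$ has to be approximated by elements of $\mathbb{S}_U$. Given $K$ and $\epsilon>0$, choose $n$ so large that $\|E_{1/n}\|_K=\tfrac1n\max_{z\in K}|e^z|<\epsilon/2$, then apply the previous paragraph to $\lambda=1/n$ to find $g\in\mathbb{S}_U$ with $\|g-E_{1/n}\|_K<\epsilon/2$. The triangle inequality gives $\|g\|_K<\epsilon$, so $0\in\overline{\mathbb{S}_U}$.

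Finally, if $U=\{0\}$ then Theorem \ref{main1} is not available, but the conclusion is trivial: every $E_\lambda$ with $\lambda\neq 0$ already lies in $\mathbb{S}_U$ since $CV(E_\lambda)=\emptyset$ and $AV(E_\lambda)=\{0\}$, and $E_0\equiv 0$ is the compact-open limit of $E_{1/n}\in\mathbb{S}_U$. I do not expect a real obstacle here; the only subtle point is to notice that Theorem \ref{main1} is stated only for $U$ with at least two points, which forces the separate treatment of $U=\{0\}$, and that the $\lambda=0$ case is not immediate from Theorem \ref{main1} but needs the two-step approximation via $E_{1/n}$.
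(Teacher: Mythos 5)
Your proof is correct and is cleaner than the paper's. The structure is the same — handle $U=\{0\}$ separately, use Theorem~\ref{main1} for the generic case, then approximate $E_0\equiv 0$ via a diagonal/two-step argument — but the key reduction for $\lambda\neq 0$ is genuinely different. You precompose with the translation $z\mapsto z+\mu$ where $e^\mu=\lambda$, exploiting the identity $E_\lambda=E_1(\cdot+\mu)$ together with the fact that precomposition by an automorphism of $\C$ leaves $S(\cdot)$ unchanged; this is immediately valid. The paper instead postmultiplies by $\lambda$, writing $(\lambda\cdot f_n)\in\mathbb{S}_U$ for $f_n\in\mathbb{S}_U$, but since $S(\lambda f)=\lambda\,S(f)$ this is not correct as stated: $\lambda f_n$ lies in $\mathbb{S}_{\lambda U}$, not $\mathbb{S}_U$. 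The paper's argument can be repaired by first applying Theorem~\ref{main1} to the set $\lambda^{-1}U$ (which still contains $0$ and another point), but it does not say so; likewise its first step, defining $g_n=\tfrac1n g$ for some $g\in\mathbb{S}_U$ and asserting $g_n\in\mathbb{S}_U$, has the same scaling issue, since $S(\tfrac1n g)=\tfrac1n U$. Your two-step approximation of $E_0$ through $E_{1/n}$ sidesteps this cleanly. In short, your route via a domain translation is the more robust one because it preserves singular values exactly, whereas the paper's range rescaling changes them and needs an extra compensating step that is omitted.
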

\begin{proof}
 Observe that $f\equiv 0\in \overline{\mathbb{S}_U}$ as we can take any $g\in\mathbb{S}_U$ and define $g_n(z)=\frac{1}{n}g(z)\in\mathbb{S}_U$ which clearly converges to $0$ as $n\rightarrow\infty$.  If $U=\{0\}$ then $Exp\backslash\{0\}\subset \mathbb{S}_U$. Let  $U$ be a closed set containing the origin and at least one more point. By  Theorem \ref{main1} there exists a sequence $(f_n)\in \mathcal{E}$ satisfying $S(f_n)=U$ and 
 $|f_n(z)-E_1(z)|\leq 2^{-n}$ on $\Delta_n:=\{z\in\C\mid |z|\leq n\}$ for all $n\geq 1$, hence for $\lambda\neq 0$ the sequence $(\lambda\cdot f_n)\in \mathbb{S}_U$ converges uniformly on compacts to the function $E_\lambda(z)$
\end{proof}
\begin{corollary} The closure of the class $\mathcal{E}$ is equal to $\mathcal{O}^*(\C)\cup\{0\}$. \footnote{By $\mathcal{O}^*(\C)$ we denote the space of all non-vanishing entire functions.}
\end{corollary}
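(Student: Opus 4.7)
The plan is to show containment in both directions. First observe that every non-constant element of $\mathcal{E}$ is non-vanishing: if $F_0 = \lambda_1 e^{F_1}$ with $\lambda_1\neq 0$, then $F_0$ has no zero, and if some $\lambda_n=0$ in the defining sequences then $F_0$ reduces to a constant (possibly $0$). So $\mathcal{E}\subseteq \mathcal{O}^*(\C)\cup\{0\}$, and by Hurwitz's theorem the uniform-on-compacts limit of any sequence in $\mathcal{E}$ is either nowhere vanishing or identically $0$. This gives $\overline{\mathcal{E}}\subseteq \mathcal{O}^*(\C)\cup\{0\}$.

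For the reverse inclusion, note first that $0\in\overline{\mathcal{E}}$ trivially, since the non-zero constants lie in $\mathcal{E}$ and $1/n\to 0$ uniformly on $\C$. It remains to show every $f\in\mathcal{O}^*(\C)$ lies in $\overline{\mathcal{E}}$. Since $f$ is entire and nowhere zero, we may write $f=e^{g}$ for some entire function $g$. By Theorem \ref{main1} (or already by Theorem \ref{main2}) we can, for each $n\geq 1$, choose a function $h_n\in\mathcal{E}$ with
\[
\|h_n - E_1\|_{\Delta_n}\leq \tfrac{1}{n},
\]
where $\Delta_n$ denotes the closed disk of radius $n$. Thus $h_n\to E_1$ uniformly on compacts.

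Now exploit the fact, recorded right after the definition of $\mathcal{E}$, that $h_n\circ g\in\mathcal{E}$ for every entire $g$. Given a compact $K\subset\C$, the image $g(K)$ is compact, hence contained in $\Delta_N$ for some $N$; then for $n\geq N$,
\[
\|h_n\circ g - f\|_{K} \;=\; \|h_n\circ g - E_1\circ g\|_{K} \;\leq\; \|h_n - E_1\|_{\Delta_n} \;\leq\; \tfrac{1}{n}.
\]
Therefore $h_n\circ g\to f$ uniformly on compacts, which puts $f$ in $\overline{\mathcal{E}}$ and completes the proof.

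There is no real obstacle once Theorem \ref{main1} is available; the only thing to watch is that the approximation of $E_1$ must be on compacts that eventually exhaust $\C$, so that after composing with an arbitrary entire $g$ we still get uniform convergence on every compact of the $z$-plane. The closure properties of $\mathcal{E}$ under composition with entire functions, which are immediate from the definition, are what make the argument work.
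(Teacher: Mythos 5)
Your proof is correct and follows essentially the same route as the paper: approximate $E_1$ by a sequence in $\mathcal{E}$ via Theorem \ref{main1}, compose with an entire $g$ satisfying $f=e^{g}$, and rule out zeros in the limit. The only cosmetic difference is that you invoke Hurwitz's theorem where the paper runs the Rouch\'e argument directly on a small disk around a hypothetical zero; these are the same idea packaged differently.
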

\begin{proof}
Let $g\in\mathcal{O}^*(\C)\cup\{0\}$. If $g$ is constant then it is already contained in $\mathcal{E}$. Assume now that $g$ is not constant hence there is a entire function $h$ such that $g(z)=E_1(h(z))$. By  Theorem \ref{main1} there exists a sequence $(f_n)\in \mathcal{E}$ which converges uniformly on compacts to $E_1(z)$, hence the sequence $(f_n\circ h)\in \mathcal{E}$ converges uniformly on compacts to $g$.

 Finally assume that  $g$ is a non-constant entire function which lies in the closure of $\mathcal{E}$ and it vanishes at some point $z_0$. There exists a small closed disk $\Delta(z_0,r)$ centred at $z_0$ such that $g$ does not vanish on $\Delta(z_0,r)-\{z_0\}$. By our assumption there exists a sequence $(f_n)\in \mathcal{E}$ which converges uniformly on compacts to $g$. Let $0<\epsilon < \min_{\partial \Delta(z_0,r)}|g|$ and let $n_0$ be sufficiently large such that $|f_{n_0}-g|<\epsilon$ on $\Delta(z_0,r)$. By Rouch\'e's theorem  functions $f_{n_0}$ and $g$ have the same number of zeros in $\Delta(z_0,r)$, which is a contradiction.
\end{proof}

\medskip
We end this paper with the following example which shows that functions in class $\mathcal{E}$ can have an empty Fatou set.

\medskip
 
{\bf Example 3:} Let $U=\{0,1\}$ and $0<\epsilon<1$. By Theorem \ref{main1} there exists a locally univalent function $F_0\in\mathcal{E}$ satisfying $S(F_0)=U$ and $\|F_0(z)-E_1(z)\|_{\Delta_{4\pi}}\leq\epsilon$. It follows from the construction of such function that we may assume $F_0(0)=1$ , see the proof of Theorem \ref{main2}. If $\epsilon$ was chosen sufficiently small, then using Rouch\'e's theorem, we can prove that  there exists a  $\lambda\in\C$ satisfying $|\lambda -2\pi i|\leq 1/2$ and $F_0(\lambda)=1$. By defining $f(z):=F_0(\lambda z)$ we obtain  a locally univalent function in $\mathcal{E}$ that satisfies $S(f)=\{0,1\}$ and $f(0)=f(1)=1$,  hence the postsingular set of $f$ is finite. Moreover using Cauchy estimates we obtain $|f'(1)|>1$, hence the Fatou set of $f$ is empty. 
 \medskip

 \section*{Acknowledgments} The author would like to thank Han Peters for fruitful discussions in the early stages of this project. This project was supported by the research program P1-0291 from ARRS, Republic of Slovenia.


\begin{thebibliography}{0}
\bibitem{BR} 
I.N. Baker, P. Rippon:
\emph{Iteration of exponential functions},
Ann. Acad. Sci. Fenn., Ser. A I Math, (9) (1984), 49-77


\bibitem{bishop} 
C.J. Bishop:
\emph{Constructing entire functions by quasiconformal folding}
Acta Math., 214 (2015), 1–60

\bibitem{berg} 
W. Bergweiler, M. Haruta, H. Kriete, H. Meier, N. Terglane:\emph{ On the limit functions of iterates in wandering domains}. Ann. Acad. Sci. Fenn. Ser. A I Math.18(2),369–375 (1993)

\bibitem{erem}
A. E. Eremenko and M. Yu. Lyubich:
\newblock \emph{Dynamical properties of some classes of entire functions}, 
\newblock { Ann. Inst. Fourier (Grenoble)}, 42 (1992), no. 4, 989-1020. 
  
\bibitem{gross}
W. Gross: \emph{Eine ganze Funktion, fur die jede komplexe Zahl Konvergenzwert st}(German), Math. Ann.79 (1–2), 201-208 (1918)


\bibitem{heins2} 
M. Heins: \emph{The set of asymptotic values of an entire function}, Tolfte Skandinaviska Matematikerkongressen (Lund, Sweden, 1953), Proceedings of the Scandinavian Math. Congress, Lund, 1954, pp. 56-60

\bibitem{heins}
M. Heins: \emph{Asymptotic spots of entire and meromorphic functions}, Ann. Math. (2) 66, 430-439 (1957)

\bibitem{milnor}
J. Milnor: \emph{Dynamics in one complex variable}, 3rd ed. Annals of Mathematics Studies 160, Princeton University Press, Princeton, NJ (2006)

\bibitem{P1}
H. Poincar\'e: \emph{Sur une classe  \'etendue de transcendantes uniformes.} C. R. Acad. Sci. Paris 103 (1886) 862–864.

\bibitem{P2}
H. Poincar\'e: \emph{Sur une classe nouvelle de transcendantes uniformes.} J. Math. Pures Appl. IV. Ser. 6 (1890) 316–365.


\bibitem{six}
D. J. Sixsmith: 
\newblock \emph{Dynamics in the Eremenko-Lyubich class},
\newblock {Conformal geometry and dynamics, An Electronic Journal of the American Mathematical Society},  22 (2018),  185-224, https://doi.org/10.1090/ecgd/324

\bibitem{sch}
D. Schleicher, \emph{Dynamics of Entire Functions} Holomorphic Dynamical Systems: Cetraro, Italy, July 7-12, 2008", Springer Berlin Heidelberg, 295--339 (2010)


\end{thebibliography}
\end{document}